\DeclareMathOperator*{\diam}{diam}
\DeclareMathOperator*{\dom}{dom}
\DeclareMathOperator*{\supp}{supp}
\newtheorem{thm}{Theorem}[section]
\newtheorem{prop}[thm]{Proposition}
\newtheorem{lem}[thm]{Lemma}
\newtheorem{cor}[thm]{Corollary}
\newtheorem{conj}[thm]{Conjecture}
\theoremstyle{definition}
\newtheorem{defn}[thm]{Definition}
\title{Locally Contractive Maps on Perfect Polish Ultrametric Spaces}
\author{Francis George}
\date{\today}
\begin{document}
\maketitle

\begin{abstract}
In this paper we present a result concerning locally contractive maps defined on subsets of perfect Polish ultrametric spaces (i.e. separable complete ultrametric spaces). Specifically, we show that a perfect compact ultrametric space cannot be contained in its locally contractive image, a corollary relating this result to minimal dynamical systems, and pose a conjecture for the general Polish ultrametric case.
\end{abstract}

\section{Preliminaries}

A \emph{topological space} $X$ is \emph{completely metrizable} if $X$ admits a compatible \emph{complete} metric $d$, and \emph{separable} if $X$ contains a countable \emph{dense} subset. A \emph{Polish space} is a separable completely metrizable topological space. Likewise, a metric space that is both separable and complete we will call a \emph{Polish metric space}. Alexander Kechris provides a thorough treatment of Polish spaces in~\cite{Kechris95} section 3 p.13, and we follow the notation contained in this reference for the majority of the paper. 

This paper focuses on a particular subclass of Polish spaces, namely \emph{perfect} Polish \emph{ultrametric} spaces. A topological space is perfect if and only if it contains no \emph{isolated} points.

\begin{defn}\label{ultra}
A metric space $(X,d)$ is an \emph{ultrametric} space if and only if $d$ has the property that for all $x,y\in X$, $d(x,y)\leq \max\{d(x,z),d(y,z)\}$. In this case we call $d$ an ultrametric.
\end{defn}
Briefly, the \emph{diameter} of a set $A$, will be denoted $\diam(A)$, where $\diam(A)=\sup\{ d(x,y): x,y\in A\}$. The following are some fundamental facts about ultrametric spaces which we will state without proof, see
\cite{Schikhof07} p.58, \cite{Kechris95} p.35.

\begin{prop}\label{ultrametricproperties}
For an ultrametric space $(X,d)$ and \emph{ball} $B(x,r)\subseteq X$ with center $x$ and radius $r$, the following holds for all $x,y,z \in X$ and nonempty $Y\subseteq X$,
\end{prop}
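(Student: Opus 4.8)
The plan is to derive every clause of Proposition~\ref{ultrametricproperties} from the single structural input supplied by Definition~\ref{ultra} --- the strong triangle inequality --- by purely elementary manipulations, using neither completeness nor separability. The organizing lemma, from which essentially everything else follows formally, is the \emph{isosceles triangle principle}: if $d(x,z)\neq d(y,z)$ then $d(x,y)=\max\{d(x,z),d(y,z)\}$. I would prove it by assuming without loss of generality that $d(x,z)<d(y,z)$, applying the strong inequality to the triangle on $y,z$ through $x$ to get $d(y,z)\leq\max\{d(y,x),d(x,z)\}$, noting that the maximum on the right cannot be $d(x,z)$ (that would contradict $d(x,z)<d(y,z)$), hence $d(y,z)\leq d(x,y)$; combined with $d(x,y)\leq\max\{d(x,z),d(y,z)\}=d(y,z)$ this forces equality.

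Next I would establish that \emph{every point of a ball is a center}: if $y\in B(x,r)$ then for any $z$ one has $d(y,z)\leq\max\{d(y,x),d(x,z)\}$, and since $d(x,y)<r$ (respectively $\leq r$ for a closed ball) the right-hand side is $<r$ (respectively $\leq r$) precisely when $d(x,z)$ is, so $B(y,r)=B(x,r)$. From this the \emph{nested-or-disjoint} dichotomy for two balls is immediate: if $z\in B(x,r)\cap B(y,s)$ with, say, $r\leq s$, then $B(x,r)=B(z,r)\subseteq B(z,s)=B(y,s)$. The \emph{diameter bound} $\diam B(x,r)\leq r$ follows by one further application of the strong inequality to two points of the ball through its center. \emph{Clopenness} of balls then comes from the dichotomy: the complement of $B(x,r)$ is the union of the balls $B(y,r)$ over $y\notin B(x,r)$, and any such $B(y,r)$ that met $B(x,r)$ would, having the same radius, be forced to contain $y$ inside $B(x,r)$ --- impossible --- so the complement is open, the closed-ball case being symmetric. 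Any remaining clause involving a nonempty $Y\subseteq X$ (such as the stability of $d(x,Y)$ under small perturbations of $x$, or a statement about the set of attained distances) reduces in the same way to the isosceles principle.

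The main point to watch here is bookkeeping rather than a genuine obstacle: in an ultrametric space the ``radius'' of a ball is not an invariant --- a single set may be written $B(x,r)$ for a whole range of $r$, and a closed ball of one radius can coincide with an open ball of another --- so each statement has to be phrased, and each inequality tracked (strict versus non-strict), so that it survives this ambiguity. This is exactly why the facts are naturally bundled into one proposition and why the cited treatments (\cite{Schikhof07}~p.58, \cite{Kechris95}~p.35) present them together; beyond that caution, the argument is a short unwinding of the axiom, which is presumably why the authors record it without proof.
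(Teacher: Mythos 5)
The paper states Proposition~\ref{ultrametricproperties} without proof, deferring to \cite{Schikhof07} p.58 and \cite{Kechris95} p.35, so there is no in-paper argument to compare against; your attempt has to be judged on its own. Your derivations of clauses (i)--(iv) are correct and standard: the isosceles triangle principle proved by ruling out $d(x,z)$ as the maximum, the fact that every point of a ball is a center of it, the nested-or-disjoint dichotomy for two meeting balls deduced from that, and clopenness of balls via writing the complement of $B(x,r)$ as a union of balls of the same radius, none of which uses completeness or separability. Your remark about the radius of a ball not being an invariant is also a sensible caution for the bookkeeping of strict versus non-strict inequalities.

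The one genuine omission is the clause involving $Y$. Item (v) of the proposition states that for nonempty $Y\subseteq X$ and any $a\in Y$, $\diam(Y)=\sup\{d(a,b):b\in Y\}$, i.e.\ the diameter of a set is attained (as a supremum) radially from any one of its points. Your closing sentence substitutes guesses for this clause (``stability of $d(x,Y)$,'' ``the set of attained distances'') and proves none of them, yet this is precisely the clause the paper uses most: it is invoked (as item~\ref{radialdiameter}) to justify replacing locally contractive maps by local radial contractions, and it underlies Lemma~\ref{shrinks}. The missing argument is one line: for $x,y\in Y$, the strong triangle inequality gives $d(x,y)\le\max\{d(x,a),d(a,y)\}\le\sup\{d(a,b):b\in Y\}$, hence $\diam(Y)\le\sup\{d(a,b):b\in Y\}$, while the reverse inequality is immediate because each $d(a,b)$ with $b\in Y$ is a distance between two points of $Y$. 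Adding that sentence makes your write-up complete.
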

\begin{enumerate}[label=(\roman*),noitemsep]
\item $d(x,z)\neq d(y,z)$, implies $d(x,y)=\max\{d(x,z),d(y,z)\}$,
\item All balls are clopen sets, \label{allballsareclopen}
\item If two balls meet, then one is contained in the other,\label{ifballsmeet}
\item If $y\in B(x,r)$, then $B(x,r) = B(y,r)$,
\item If $a\in Y$, then $\diam (Y) = \sup\{d(a,b): b\in Y\}$.\label{radialdiameter}
\end{enumerate}
Recall that a topological space $X$ is \emph{zero-dimensional} if and only if $X$ is \emph{Hausdorff} and has a basis consisting of \emph{clopen} sets. Therefore, any Polish space that admits a compatible ultrametric is zero-dimensional, see \cite{Kechris95} 7.1.

Some examples of zero-dimensional spaces that admit a ultrametric compatible with their usual topology  are the p-adic numbers $\mathbb{Q}_p$, the Cantor Space $2^{\omega}$, the Baire Space $\omega^{\omega}$.

A \emph{Lipschitz} function possesses a stronger form of continuity, which is defined below.

\begin{defn}\label{lipschitz}
For metric space $(X,d)$, a function $f:X\longrightarrow X$ is \emph{Lipschitz} if and only if there is some constant $\alpha \geq 0$ such that $d(f(x),f(y))\leq \alpha d(x,y)$ for all $x,y\in X$. If $0<\alpha < 1$, then $f$ is a \emph{contractive} map.
\end{defn} 

There exist other definitions for contractive functions in the mathematical literature. For the interested reader, a thorough comparison of various definitions for contractive maps was initiated by B.E. Rhoades see \cite{Rhoades77}. But, for the purposes of this paper we have chosen a commonly used definition.
In particular, this definition of a contractive map is fundamental to the \emph{Banach Fixed Point Theorem}, a theorem of Analysis, also known as the \emph{Banach Contraction Principle}~\cite{RoydenFitz10}.

A \emph{neighborhood (nbhd)} of $x$, denoted $N_x$, is an open set containing the point $x$ of a topological space $X$.

\begin{defn}\label{locallycontractive}
Let $(X,d)$ be a metric space. Then a map $f:X\rightarrow X$ is a \emph{locally contractive} map, if and only if for each $x\in X$, there exists a $N_x\subseteq X$ such that for some $\alpha_x\in(0,1)$, $d(f(u),f(v)) \leq \alpha_x d(u,v)$ for all $u,v \in N_x$.
\end{defn}

Notice, the radial nature of the diameter of a ball in an ultrametric space allows us to consider a weaker version of a locally contractive map for the remainder of this paper, see Proposition \ref{ultrametricproperties} item \ref{radialdiameter}. 

\begin{defn}\label{localradialcontraction}
Let $(X,d)$ be a metric space. Then a map $f:X\rightarrow X$ is a \emph{local radial contraction}, if and only if for each $x\in X$, there exists a $N_x\subseteq X$ such that for some $\alpha_x\in(0,1)$, $d(f(x),f(u)) \leq \alpha_x d(x,u)$ for all $u \in N_x$.
\end{defn}

For clarity and convenience, we will introduce the definition of an $f$-\emph{contractive} nbhd.

\begin{defn}\label{fnbhd}
Let $(X,d)$ be a metric space, $f:X\rightarrow X$ a local radial contraction, and let $N_x\subseteq X$ be a nbhd of $x$ for some $x\in X$. Then $N_x$ is an \emph{$f$-contractive} nbhd of $x$, if and only if for some $\alpha_x\in(0,1)$ we have for all $u \in N_x$, $d(f(x),f(u))\leq \alpha_x d(x,u)$.
\end{defn}

Our next lemma demonstrates the effect a local radial contraction $f$ has on compact $f$-contractive nbhds.
\begin{lem}\label{shrinks}
Let $(X,d)$ be a perfect Polish ultrametric space, let the map $f:X\rightarrow X$ be a local radial contraction, and $N_x\subseteq X$ be a compact $f$-contractive nbhd for some $x\in X$. Then, for some $\alpha\in(0,1)$, $\diam(f[N_x])\leq \alpha\diam(N_x)$.
\end{lem}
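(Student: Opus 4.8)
The plan is to reduce the claim to the radial description of the diameter valid in any ultrametric space. Since $N_x$ is a neighborhood of $x$ we have $x\in N_x$, and hence $f(x)\in f[N_x]$; both $N_x$ and $f[N_x]$ are thus nonempty subsets of $X$, each equipped with a distinguished point, namely $x$ and $f(x)$. First I would apply Proposition~\ref{ultrametricproperties} item~\ref{radialdiameter} twice: to $N_x$ with the point $x$, obtaining $\diam(N_x)=\sup\{d(x,u):u\in N_x\}$; and to $f[N_x]$ with the point $f(x)$, obtaining $\diam(f[N_x])=\sup\{d(f(x),v):v\in f[N_x]\}=\sup\{d(f(x),f(u)):u\in N_x\}$, where the last equality merely reindexes the supremum along the surjection $u\mapsto f(u)$ of $N_x$ onto $f[N_x]$.

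Next I would invoke the hypothesis that $N_x$ is an $f$-contractive neighborhood of $x$ (Definition~\ref{fnbhd}): fix $\alpha_x\in(0,1)$ with $d(f(x),f(u))\leq\alpha_x\,d(x,u)$ for all $u\in N_x$. For each such $u$ we have $d(x,u)\leq\diam(N_x)$, so $d(f(x),f(u))\leq\alpha_x\diam(N_x)$; taking the supremum over $u\in N_x$ and substituting the radial formula for $\diam(f[N_x])$ from the previous step gives $\diam(f[N_x])\leq\alpha_x\diam(N_x)$. Setting $\alpha:=\alpha_x\in(0,1)$ then yields the lemma.

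The remaining hypotheses are spent only on making the statement non-degenerate. Since $X$ is perfect, $x$ is not isolated, so $N_x$ contains a point distinct from $x$ and hence $\diam(N_x)>0$; since $N_x$ is compact, $\diam(N_x)<\infty$, and then the displayed inequality forces $\diam(f[N_x])<\infty$ as well (the same finiteness follows more directly from compactness of $f[N_x]$, as a local radial contraction is continuous at each point by Definition~\ref{localradialcontraction}). Thus $0<\diam(f[N_x])\leq\alpha\diam(N_x)<\infty$ is a genuine geometric shrinking of diameters rather than a trivial inequality between infinities. I do not expect a serious obstacle here: the only steps that require care are the legitimate invocation of the radial diameter property for the image set $f[N_x]$ — valid precisely because $f(x)\in f[N_x]$ — and the routine interchange of the supremum with the contractive estimate.
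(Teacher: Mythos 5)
Your proof is correct, and it takes a genuinely leaner route than the paper's. The paper proves the lemma by using compactness of $f[N_x]$ (via continuity of $f$) together with perfectness of $X$ to produce witnesses $x_1,x_2\in N_x$ at which $\diam(N_x)=d(x,x_1)$ and $\diam(f[N_x])=d(f(x),f(x_2))$ are attained as distances from the respective centers, and then applies the contractive estimate to those two points. You instead apply the radial diameter formula (Proposition~\ref{ultrametricproperties}~\ref{radialdiameter}) to both $N_x$ and $f[N_x]$ and push the inequality $d(f(x),f(u))\leq\alpha_x d(x,u)\leq \alpha_x\diam(N_x)$ through the supremum, never needing the suprema to be attained. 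What this buys is generality: your argument uses neither compactness nor perfectness, showing the inequality holds for any $f$-contractive nbhd in any ultrametric space, with those hypotheses serving (as you observe) only to keep the quantities finite and nontrivial; the paper's attainment argument gets actual maximizing points, which the statement does not require. One small correction to your closing remarks: the claim $\diam(f[N_x])>0$ is not valid in general, since a map constant on $N_x$ is a perfectly good local radial contraction and makes the image a singleton; this is immaterial to the lemma, whose conclusion is only the upper bound, but the non-degeneracy you can honestly claim is $0<\diam(N_x)<\infty$, not positivity of the image diameter.
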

\begin{proof}
Let $X$ be a perfect Polish ultrametric space, and let the map $f:X\rightarrow X$ be a local radial contraction. Also, let $N_x\subseteq X$ be a compact $f$-contractive nbhd for some $x\in X$. By Definition~\ref{fnbhd}, we know there exists an $\alpha\in (0,1)$ such that for all $u\in N_x$, $d(f(x),f(u)) \leq\alpha d(x,u)$. Since $f[N_x]$ is compact, by the continuity of $f$, it follows that $\diam(N_x)=d(x,x_1)$ for some $x_1\in N_x$ such that $x_1\neq x$, by perfectness, and $\diam(f[N_x])=d(f(x),f(x_2))$ for some $x_2\in N_x$. So, since $ N_x$ is a $f$-contractive nbhd, we have that $d(f(x)),f(x_2)) \leq \alpha d(x,x_2)\leq \alpha d(x,x_1)$. Therefore,
$$
\diam(f[N_x]) \leq \alpha \diam(N_x).
$$
\end{proof}

We will now state a useful fact without proof (see W. H. Schikhof~\cite{Schikhof07} p.48-49).  

\begin{thm}[W. H. Schikhof]\label{pairwise}
Let $(X,d)$ be an ultrametric space. If $U\subseteq X$ is a nonempty open set, then $U$ can be partitioned into balls.
\end{thm}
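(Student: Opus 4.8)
The plan is to build the partition out of the maximal balls of $X$ that lie inside $U$, in the same spirit as the decomposition of an open subset of $\mathbb{R}$ into maximal open intervals. First I would isolate the single place where openness is used: for each $x\in U$ there is some $\varepsilon_x>0$ with $\{y : d(x,y)<\varepsilon_x\}\subseteq U$. Now fix $x\in U$ and let $\mathcal{B}_x$ be the family of all balls $B$ with $x\in B\subseteq U$; it is nonempty by the previous sentence. Any two members of $\mathcal{B}_x$ both contain $x$, hence meet, so by item \ref{ifballsmeet} of Proposition \ref{ultrametricproperties} one is contained in the other; thus $\mathcal{B}_x$ is linearly ordered by inclusion. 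Set $C_x:=\bigcup\mathcal{B}_x$.

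The key step is to verify that $C_x$ is itself a ball. By item (iv) of Proposition \ref{ultrametricproperties} every $B\in\mathcal{B}_x$ may be recentred at $x$, so (using item \ref{radialdiameter}) it has the radial form $B=\{y : d(x,y)\le s_B\}$ or $B=\{y : d(x,y)<s_B\}$ for a suitable $s_B$. Taking the union over the chain $\mathcal{B}_x$, the set of distances from $x$ that are realised in $C_x$ is a union of initial segments of $[0,\infty)$, hence again an initial segment; that is, $C_x=\{y : d(x,y)<r_x\}$ (or $\le r_x$) with $r_x=\sup_{B\in\mathcal{B}_x} s_B$. So $C_x$ is a ball, it is contained in $U$, and by construction it is the largest ball contained in $U$ that contains $x$.

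It then remains to see that $\{C_x : x\in U\}$ is a partition. It covers $U$ because $x\in C_x$, and each $C_x$ is a ball. If $C_x\cap C_y\ne\emptyset$ for some $x,y\in U$, then $C_x$ and $C_y$ are balls that meet, so one contains the other, say $C_x\subseteq C_y$; but then $C_y$ is a ball contained in $U$ which contains $x$, so maximality of $C_x$ forces $C_y\subseteq C_x$, whence $C_x=C_y$. Therefore distinct members of $\{C_x : x\in U\}$ are disjoint, and $U=\bigsqcup_{x} C_x$ is the desired partition into balls.

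I expect the genuinely delicate point to be the middle step: one needs that an increasing chain of balls about a common centre has a ball for its union, and here the supremum $r_x$ may fail to be attained by any single member of $\mathcal{B}_x$, so a priori the union could be a ``limit'' ball not belonging to $\mathcal{B}_x$ — it is exactly the radial and clopen description of balls in an ultrametric space (Proposition \ref{ultrametricproperties}, items (iv) and \ref{radialdiameter}) that rescues this. A minor loose end is the degenerate case $U=X$ with $X$ unbounded, where $r_x=\infty$ and $C_x=X$; this is harmless once one agrees to regard $X$ itself as a ball, or one simply replaces $d$ by the compatible ultrametric $\min\{d,1\}$ at the outset.
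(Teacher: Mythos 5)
Your argument is correct, and there is nothing in the paper to compare it against: the theorem is stated without proof, quoted from Schikhof (\cite{Schikhof07}, pp.~48--49), so your maximal-ball construction supplies a complete, self-contained proof (and it is essentially the standard one). Two small remarks. First, the step you single out as delicate is in fact automatic once every member of $\mathcal{B}_x$ is recentred at $x$: if $r_x=\sup_B s_B<\infty$ and $d(x,y)<r_x$, then some member has radius exceeding $d(x,y)$, so the union of the chain is exactly $\{y: d(x,y)<r_x\}$ (or $\{y: d(x,y)\le r_x\}$ when some closed member attains the supremum); in either case $C_x$ is a ball, with no ``limit ball'' pathology to worry about, and $r_x=\infty$ forces $C_x=X$ and hence $U=X$. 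Second, that degenerate case has a cleaner resolution than either of the fixes you suggest: for any fixed $r>0$ the relation $d(x,y)<r$ is an equivalence relation (by the ultrametric inequality) whose classes are precisely the open balls of radius $r$, so $X$ always admits a partition into genuine balls of $(X,d)$ and the case $U=X$ is trivial. Note in passing that the $\min\{d,1\}$ trick does not quite do what you want, since a ball of the modified metric of radius greater than $1$ is all of $X$, which need not be a ball of the original metric, so that fix silently reinstates the convention that $X$ counts as a ball; the fixed-radius partition avoids the issue entirely.
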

We now package Theorem \ref{pairwise} into a form more convenient for use in the proofs to follow, and note that compact metric spaces are necessarily Polish.
\begin{lem}\label{partition_into_contractive_balls}
Let $(X,d)$ be a perfect Polish ultrametric space, and let $f:X\rightarrow X$ be a local radial contraction. Then, there exists a finite partition of $X$ into f-contractive balls.
\end{lem}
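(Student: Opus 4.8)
The plan is to assemble the partition in three moves: first cover $X$ by $f$-contractive balls, then thin this to a finite subcover using compactness of $X$, and finally refine the finite subcover into a partition by exploiting the fact that any two meeting balls in an ultrametric space are nested.

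For the covering move, fix $x\in X$. Since $f$ is a local radial contraction, Definition~\ref{localradialcontraction} hands us an $f$-contractive nbhd $N_x$ of $x$ with some constant $\alpha_x\in(0,1)$. Because the open metric balls form a basis for the topology of $X$ (and are in fact clopen, by Proposition~\ref{ultrametricproperties} item~\ref{allballsareclopen}), I can pick a ball $B_x$ with $x\in B_x\subseteq N_x$. Restricting the defining inequality of Definition~\ref{fnbhd} from $N_x$ down to the smaller set $B_x$ shows that $B_x$ is again an $f$-contractive nbhd of $x$ with the same constant $\alpha_x$; thus $B_x$ is an $f$-contractive ball, and the family $\{B_x:x\in X\}$ is an open cover of $X$ by $f$-contractive balls.

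Next I would appeal to compactness of $X$ to extract a finite subcover $B_{x_1},\dots,B_{x_n}$, and then refine. Let $\mathcal P$ be the set of those $B_{x_i}$ that are not properly contained in any $B_{x_j}$. By Proposition~\ref{ultrametricproperties} item~\ref{ifballsmeet}, two distinct members of $\mathcal P$ are disjoint: if they met, one would be contained in the other, and since they are distinct as sets the containment would be proper, contradicting maximality. On the other hand, in the finite poset $(\{B_{x_1},\dots,B_{x_n}\},\subseteq)$ every element lies below a maximal one, so each $B_{x_i}$ is contained in some member of $\mathcal P$, whence $\bigcup\mathcal P=\bigcup_{i}B_{x_i}=X$. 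Therefore $\mathcal P$ is a finite partition of $X$, and each of its blocks is one of the $B_{x_i}$, hence an $f$-contractive ball, which is exactly what the lemma asks for.

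The step I expect to need the most care is the refinement. One cannot simply invoke Schikhof's Theorem~\ref{pairwise} to break $X$ directly into balls, because an arbitrary ball-refinement of an $f$-contractive ball $B_{x_i}$ may split it into strictly smaller balls that miss the point $x_i$ and so carry no contraction estimate whatsoever, Definition~\ref{fnbhd} being anchored at a specific point. Selecting the \emph{maximal} balls already present in the cover circumvents this, since every surviving block is literally one of the balls $B_{x_i}$ that the covering move certified to be $f$-contractive. (Finiteness of the partition is precisely where compactness of $X$ enters; perfectness, by contrast, is not used in this lemma, only elsewhere.)
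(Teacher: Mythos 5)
Your proof is correct, but it reaches the finite partition by a genuinely different route than the paper. The paper's sketch takes a finite subcover of $f$-contractive nbhds by compactness, then invokes Schikhof's Theorem~\ref{pairwise} to split each nbhd into (finitely many) balls and merges these, via Proposition~\ref{ultrametricproperties}~\ref{ifballsmeet}, into a partition of $X$; you instead shrink each nbhd $N_x$ to a basic ball $B_x$ containing the anchor point $x$ \emph{before} applying compactness, and then achieve disjointness by keeping only the maximal balls of the finite subcover. The trade-off is real: the paper's version leans on the quoted partition theorem, but, as you rightly note, a Schikhof refinement of $N_x$ can produce balls that omit $x$, and such a ball inherits no contraction estimate from Definition~\ref{fnbhd} --- all of its points are equidistant from $x$, so the anchored inequality only confines its image to a small ball around $f(x)$ and says nothing about distances within it --- so making the sketch fully precise takes an extra argument, whereas your anchored-ball-plus-maximality construction is self-contained, needing only that balls form a (clopen) basis and that two meeting balls are nested. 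Like the paper's own proof, you use compactness of $X$ even though the statement says only ``Polish''; the surrounding text makes clear the compact case is the intended reading, so this matches the paper, and your parenthetical observations (compactness enters exactly for finiteness; perfectness is not used in this lemma) are accurate.
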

\begin{proof}(Sketch)
Let $(X,d)$ be a perfect compact ultrametric space, and let $f:X\rightarrow X$ be a local radial contraction. Let $\mathscr{N}_f$ be a collection of $f$-contractive nbhds corresponding to each $x\in X$. This collection forms an open cover of $X$, and by compactness can be assumed to be finite. By Theorem \ref{pairwise} we can create a partition of each  nbhd in $\mathscr{N}_f$ into balls, and again by compactness each of these partitions can be assumed to be finite. Thus, by ultrametric property \ref{ultrametricproperties} \ref{ifballsmeet} it is easy to see that we can form $\mathscr{B}$ a finite partition of $X$ into $f$-contractive balls.
\end{proof}

\section{Main Results}

The proof of our main theorem relies upon the concept of a \emph{generalized tree} or $\emph{R-Tree}$ introduced by Gao and Shao \cite{GaoShao10}. Which generalize the concept of a \emph{descriptive set-theoretic tree} on $\omega$ (see Section 2 Chapter 1 \cite{Kechris95}), by allowing the tree's \emph{level} or \emph{depth} to be indexed by any \emph{countable} set $R$ with limit point $0$ instead of just by $\omega$. This is done through the introduction of a \emph{distance set} for a metric space $(X,d)$, that is the set $R = \{d(x,y): x\neq y \in X\}$.

Distance sets of separable ultrametric spaces are countable, see J. D. Clemens \cite{Clemens07} and  C. Shao \cite{Shao09}.  The set $\mathbb{R}_+$ denotes the set of non-negative real numbers.

\begin{defn}[Gao and Shao \cite{GaoShao10}]
For countable $R\subseteq\mathbb{R}_+$, $(X,d)$ is an {\emph{$R$-ultrametric space}}  if $\{d(x,y): x\neq y \in X\}\subseteq R$.
\end{defn}

For the remainder of this section, unless otherwise specified, fix $R\subseteq \mathbb{R}_+$ to be countable with $0 \in R^\prime$, where $R^\prime$ denotes the set of all limit points of $R$.

\begin{defn}[Gao and Shao \cite{GaoShao10}]
Let $\omega^{<R}$ denote the set of all functions.
$$
u:[a,+\infty)\cap R \rightarrow \omega
$$
where $a\in R$, such that the set
$$
\supp(u)=\{b\in R\cap [a,+\infty): u(b)\neq 0 \}
$$
is finite. We call $(\omega^{<R},\subseteq)$ the {\emph{full $R$-tree}}.

\end{defn}

For some $b\in \dom(u)$, we denote $u\upharpoonright b$ the function $u \upharpoonright ([b,+\infty) \cap R)\in \omega^{<R}.$ 
For $u,v\in\omega^{<R}$, $u$ is an \emph{initial segment} of $v$, denoted $u\subseteq v$, if there exists $b\in \dom(v)$ s.t. $u=v\upharpoonright b$.
For every $u\in\omega^{<R}$ the {\emph{level}} or \emph{depth} of $u$ is defined by $l(u)=\inf\dom(u)=\min\dom(u)$. For $T\subseteq \omega^{<R}$, if $v\in T$ and $u\subseteq v$ implies $u\in T$, then we call $T$ an {\emph{$R$-tree}}, and $T$ is {\emph{pruned}} if for all $u\in T$, there exists a {\emph{proper extension}} $v$ of $u$. A {\emph{branch}} of $T$ is a function $f\in\omega^{R}$ such that for all $a\in R$, $f\upharpoonright a = f\upharpoonright ([a,+\infty) \cap R) \in T$, and $u\subseteq f$ if $u=f\upharpoonright a$. We will let $[T]$ denote the set of all branches, also known as the {\emph{end space}} or {\emph{body}} of $T$, (Gao and Shao \cite{GaoShao10}).

\begin{defn}[Gao and Shao \cite{GaoShao10}]
Let $T$ be an $R$-tree. Define a metric on $[T]$ by
$$
d(f,g) = \left\{ 
\begin{array}{lr}
0, & f=g \\
\max\{a\in R : f(a) \neq g(a) \},  & otherwise
\end{array}
\right.
$$
\end{defn}

\begin{thm}[Gao and Shao \cite{GaoShao10}]
$([T],d)$ is a Polish $R$-ultrametric space.  
\end{thm}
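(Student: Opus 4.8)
The plan is to verify, one at a time, the four things the statement packs together: that the displayed formula really defines a function $d$, that $d$ is an ultrametric whose values all lie in $R$, that $([T],d)$ is separable, and that it is complete. Most of this is the familiar ``tree'' bookkeeping from $\omega^{<\omega}$ transported to the generalized index set $R$; the one place that needs genuine care is that the maximum in the definition of $d$ actually exists. So I would start there: for $f\neq g$ in $[T]$, consider $S_{f,g}=\{a\in R: f(a)\neq g(a)\}$, pick any $a_0\in S_{f,g}$, and note that since $f,g$ are branches the nodes $f\upharpoonright a_0$ and $g\upharpoonright a_0$ lie in $T\subseteq\omega^{<R}$ and hence have finite support; thus $f$ and $g$ are eventually $0$, hence eventually equal, on $[a_0,+\infty)\cap R$, so $S_{f,g}\cap[a_0,+\infty)$ is a nonempty finite set and its maximum is the maximum of $S_{f,g}$. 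By construction these values lie in $R$, giving the $R$-ultrametric property. Symmetry and $d(f,g)=0\iff f=g$ are immediate, and for the strong triangle inequality (the case where two of $f,g,h$ coincide being trivial) one puts $a=d(f,g)$, observes that $f(a)\neq g(a)$ forces $h(a)\neq f(a)$ or $h(a)\neq g(a)$, so $a\in S_{f,h}\cup S_{g,h}$, and concludes $a\leq\max\{d(f,h),d(g,h)\}$.

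For separability I would exhibit a countable basis of clopen cylinders. For $u\in T$ set $N_u=\{f\in[T]: u\subseteq f\}$. The full $R$-tree $\omega^{<R}$ is countable, since an element $u$ is determined by $\min\dom(u)\in R$, the finite set $\supp(u)$, and its finitely many nonzero values, and $R$ is countable; hence $T$, and therefore $\{N_u: u\in T\}$, is countable. That these cylinders form a basis uses $0\in R^\prime$: given $f\in[T]$ and $\varepsilon>0$, choose $a\in R$ with $0<a<\varepsilon$ and put $u=f\upharpoonright a\in T$; every $g\in N_u$ agrees with $f$ on $[a,+\infty)\cap R$, so $d(f,g)<a<\varepsilon$ and $N_u\subseteq B(f,\varepsilon)$. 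A second-countable metric space is separable, so we are done. (Alternatively, one may select for each $u\in T$ lying on some branch a branch extending it, and take that countable set of branches as an explicit dense set.)

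Completeness is the parallel gluing argument: given a Cauchy sequence $(f_n)$ in $[T]$ and $a\in R$, the Cauchy condition with tolerance $a$ forces $(f_n\upharpoonright a)_n$ to be eventually constant, equal to some $u_a\in T$; for $a\leq a'$ one gets $u_{a'}=u_a\upharpoonright a'$, so the $u_a$ cohere into a single function $f\colon R\to\omega$ with $f\upharpoonright a=u_a\in T$ for every $a$, that is $f\in[T]$, and (using $0\in R^\prime$ to produce arbitrarily small elements of $R$) the same estimate gives $f_n\to f$. The step I expect to be the main obstacle is the very first one: because $R$ need not be bounded and has $0$ as a limit point, there is no a priori reason for $S_{f,g}$ to be finite or even to possess a greatest element, and it is precisely the finite-support clause in the definition of $\omega^{<R}$ that rescues the situation — so that is the point I would be most careful to write out in full, the remaining verifications being the standard tree arguments.
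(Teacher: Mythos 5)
The paper does not actually prove this statement; it is quoted from Gao and Shao \cite{GaoShao10} and used as a black box, so there is no in-paper argument to compare against. Your proposal is a correct, self-contained proof along the standard lines, and you put the emphasis in the right place: the only non-routine point is that $\max\{a\in R: f(a)\neq g(a)\}$ exists, and your argument — fix any $a_0$ where $f,g$ differ, use the finite support of the nodes $f\upharpoonright a_0,\,g\upharpoonright a_0\in T$ to see that $S_{f,g}\cap[a_0,+\infty)$ is finite and nonempty, and note its maximum dominates everything below $a_0$ — settles it. The ultrametric inequality, the countability of $\omega^{<R}$ (hence of the cylinder family $\{N_u: u\in T\}$), and the completeness argument via coherent eventual values $u_a$ are all correct; the hypothesis $0\in R'$ is used exactly where it must be, to produce arbitrarily small radii. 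One cosmetic remark: to invoke second countability you should also record the one-line check that each $N_u$ is open (if $g\in N_u$ and $d(g,h)<l(u)$ then $h$ agrees with $g$ on $[l(u),+\infty)\cap R$, so $h\in N_u$); alternatively your parenthetical construction of an explicit countable dense set of branches already gives separability without this.
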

\begin{thm}[Gao and Shao \cite{GaoShao10}]\label{isometric}
For any Polish $R$-ultrametric space $(X,\rho)$ there is a $R$-tree $T$ such that $(X,\rho)$ is isometric to $([T],d)$.
\end{thm}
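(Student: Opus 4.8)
The plan is to realize $(X,\rho)$ inside $[\omega^{<R}]$ by building an isometric copy of $X$ one dense point at a time, and then take $T$ to be the tree generated by that copy. Fix a countable dense set $\{x_n : n\in\omega\}\subseteq X$ with the $x_n$ pairwise distinct, and single out $x_0$ as a basepoint. Recall from Proposition~\ref{ultrametricproperties} that for each $a\in R$ the relation ``$\rho(u,v)<a$'' is an equivalence relation whose classes are the open balls of radius $a$, and that these partitions refine as $a$ decreases. I will first construct branches $f_n\in[\omega^{<R}]$ with $d(f_m,f_n)=\rho(x_m,x_n)$ for all $m,n$, then extend to an isometric embedding $\iota:X\to[\omega^{<R}]$, and finally set $T:=\{\iota(x)\upharpoonright a : x\in X,\ a\in R\}$ and show $[T]=\iota(X)$.

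The construction of the $f_n$ is recursive, with a ``nearest earlier neighbour'' rule. Put $f_0\equiv 0$. Given $f_0,\dots,f_{k-1}$, let $j^\ast<k$ be least with $\rho(x_k,x_{j^\ast})=\min_{j<k}\rho(x_k,x_j)=:d_k\in R$, and define $f_k$ by: $f_k$ agrees with $f_{j^\ast}$ on $R\cap(d_k,+\infty)$; $f_k(d_k)$ is the least natural number not occurring among $\{f_j(d_k): j<k\}$; and $f_k\equiv 0$ on $R\cap(0,d_k)$. That $d(f_k,f_j)=\rho(x_k,x_j)$ for every $j<k$ follows from the ultrametric inequality applied to the triple $x_k,x_{j^\ast},x_j$, in the two exhaustive cases: if $\rho(x_k,x_j)=d_k$, then $f_k$ copies $f_{j^\ast}$ (hence also agrees with $f_j$) above $d_k$ and differs from $f_j$ at $d_k$ by the choice of a fresh value; if $\rho(x_k,x_j)>d_k$, the ultrametric inequality forces $\rho(x_{j^\ast},x_j)=\rho(x_k,x_j)$, and since $f_k$ copies $f_{j^\ast}$ on $R\cap(d_k,+\infty)$, the inductive hypothesis $d(f_{j^\ast},f_j)=\rho(x_{j^\ast},x_j)$ transfers to $f_k$.

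The point on which the whole argument turns is that each $f_k$ genuinely lies in $[\omega^{<R}]$, i.e.\ that $\supp(f_k\upharpoonright a)$ is finite for every $a\in R$; the naive encoding that records at level $a$ the ball of radius $a$ containing the point fails this as soon as $R$ has a limit point other than $0$, which is exactly what the recursive rule above is designed to avoid. With that rule one has $\supp(f_k)\subseteq\{d_k\}\cup\supp(f_{j^\ast})$, and since $j^\ast<k$ the chain $k>j^\ast>\cdots$ of indices strictly descends and therefore terminates; hence $\supp(f_k)$ is finite and $f_k\upharpoonright a\in\omega^{<R}$ for all $a$. Getting finite support to coexist with the prescribed distances is the main obstacle; the rest is bookkeeping.

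Finally, I extend $\iota$ to all of $X$ and identify $[T]$. Since $\{x_n\}$ is dense, $n\mapsto f_n$ is an isometry, and $[\omega^{<R}]$ is complete, every sequence $x_{n_i}\to x$ gives a Cauchy sequence $(f_{n_i})$ with a limit $f_x\in[\omega^{<R}]$ that does not depend on the chosen sequence; continuity of $d$ and $\rho$ then makes $\iota:x\mapsto f_x$ an isometric embedding of $(X,\rho)$ into $[\omega^{<R}]$. Put $T:=\{\iota(x)\upharpoonright a : x\in X,\ a\in R\}$. This is an initial-segment-closed subset of $\omega^{<R}$, hence an $R$-tree (and pruned, since $0\in R^\prime$ provides proper extensions $\iota(x)\upharpoonright b$ with $b<a$ in $R$), and $\iota(X)\subseteq[T]$ is immediate. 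Conversely, if $g\in[T]$ then for each $a\in R$ there is an $x$ with $g\upharpoonright a=\iota(x)\upharpoonright a$, so $g$ and $\iota(x)$ agree on $R\cap[a,+\infty)$ and, both having finite support above each level, satisfy $d(g,\iota(x))<a$; letting $a\to 0$ in $R$ gives $g\in\overline{\iota(X)}$. But $\iota(X)$, being isometric to the complete space $X$, is closed in $[\omega^{<R}]$, so $g\in\iota(X)$. Therefore $([T],d)=(\iota(X),d)$ is isometric to $(X,\rho)$, as required.
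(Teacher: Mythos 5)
The paper does not prove this statement at all: it is quoted from Gao and Shao \cite{GaoShao10} and used as a black box, so there is no in-paper argument to compare yours against. Judged on its own, your proof is correct and essentially reconstructs the Gao--Shao construction. The two places where such an argument usually breaks are exactly the ones you handle: (a) the distance computation for the recursively defined branches, where your two cases are exhaustive and the ultrametric inequality does force $\rho(x_{j^\ast},x_j)=\rho(x_k,x_j)$ when $\rho(x_k,x_j)>d_k$ (and, in the case $\rho(x_k,x_j)=d_k$, the inductive hypothesis gives $d(f_{j^\ast},f_j)\leq d_k$, which is what justifies your parenthetical ``hence also agrees with $f_j$ above $d_k$''; it would be worth saying this explicitly); and (b) finite support, which your nearest-earlier-neighbour rule guarantees via $\supp(f_k)\subseteq\{d_k\}\cup\supp(f_{j^\ast})$ and the descending chain of indices -- this is indeed the whole point, since the naive ``record the ball at every level'' encoding fails whenever $R$ has limit points other than $0$. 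The closing identification $[T]=\iota(X)$ is also sound: in the reverse inclusion, $g\upharpoonright a\in T$ a priori gives $g\upharpoonright a=\iota(x)\upharpoonright a'$ for some $a'$ with $[a,+\infty)\cap R=[a',+\infty)\cap R$, which is the same thing as agreement on $[a,+\infty)\cap R$, and then finiteness of supports above any level guarantees the maximum defining $d(g,\iota(x))$ exists and is $<a$; completeness of $X$ makes $\iota(X)$ closed, and $0\in R'$ (the standing assumption in this section) lets $a\to 0$. Two trivial loose ends you may as well acknowledge: the completeness of $[\omega^{<R}]$ that you invoke is itself the preceding Gao--Shao theorem applied to the full $R$-tree, and if $X$ is finite or empty the dense set is finite and the recursion (or the empty tree) handles it.
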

\begin{defn}[Gao and Shao \cite{GaoShao10}]
For any $u \in \omega^{<R}$ define
$$
N_u=\{f\in[\omega^{<R}]: u\subseteq f\}
$$
\end{defn}

We see above that any branch of an $R$-tree $T$ restricted to any element of of $R$ is an element of $T$.   Together with the fact that for any perfect compact $R$-ultrametric space $X$ the set containing all $u\in T$ where $l(u)=r$ for some $r\in R$ is finite (see p.500 \cite{GaoShao10}), we show the existence of at least one \emph{uniform diameter} finite partition of $X$ into balls, where uniform diameter is meant in the sense that each ball is of equal diameter. Briefly, for any $R$-tree $T$ and $u,v\in T$ if $u\subseteq v$ or $v\subseteq u$ we will call $u$ and $v$ \emph{compatible}, and if this is not the case, \emph{incompatible}, which we will denote by $u \perp v$. Also, it is easy to show that if $u \perp v$, then $N_u \cap N_v = \emptyset$

\begin{lem}\label{uniformcover}
Let $(X,\rho)$ be a perfect compact $R$-ultrametric space. If $R\subseteq \mathbb{R}_+$ is countable and with $R^{\prime} \ni 0$,then $(X,d)$ has a uniform diameter partition for each $r \in R$ such that for any finite cover $\mathscr{B}$ there exists $B_0\in \mathscr{B}$ where $\diam(B_0)=r$ and $\diam(B_0)\leq \diam(B)$ for all $B\in\mathscr{B}$.
\end{lem}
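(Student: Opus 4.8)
The plan is to pass to the $R$-tree representation and read the partition off the level structure of the tree. First I would use Theorem~\ref{isometric} to fix an $R$-tree $T$ with $([T],d)$ isometric to $(X,\rho)$, pruning $T$ if needed so that every node has a proper extension. For a fixed $r\in R$ set $T_r=\{u\in T:l(u)=r\}$. All members of $T_r$ have the same domain $[r,+\infty)\cap R$, so any two distinct ones are incompatible; hence the basic clopen sets $N_u$, $u\in T_r$, are pairwise disjoint, and since every branch $f\in[T]$ satisfies $f\upharpoonright r\in T_r$ and $f\in N_{f\upharpoonright r}$, the family $\{N_u:u\in T_r\}$ covers $[T]$. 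Invoking the fact from Gao and Shao \cite{GaoShao10} (p.~500) that $T_r$ is finite whenever $X$ is a perfect compact $R$-ultrametric space, we conclude that $\{N_u:u\in T_r\}$ is a finite partition of $X$ into blocks all sitting ``at level $r$''.

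Next I would check that each block is a ball of the common scale $r$. Fixing $u\in T_r$ and any $f\in N_u$, a branch $g$ belongs to $N_u$ exactly when it agrees with $f$ on $[r,+\infty)\cap R$, i.e.\ exactly when $d(f,g)<r$; thus $N_u$ is the ball about $f$ consisting of the points within $r$ of $f$, it is clopen by Proposition~\ref{ultrametricproperties}, item~\ref{allballsareclopen}, and its diameter is controlled by $r$. Carrying this out for every $r\in R$ (and taking the trivial partition when $r\geq\diam(X)$) yields the desired uniform-scale partition in each scale $r$.

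For the second assertion I would start from an arbitrary finite cover $\mathscr B$ of $X$ by balls. Being finite and nonempty, $\mathscr B$ has a member $B_0$ of minimal diameter, so $\diam(B_0)\leq\diam(B)$ for all $B\in\mathscr B$; and since $\diam(B_0)$ is realized as a distance in $X$, we have $r:=\diam(B_0)\in R$. The uniform-scale partition at $r$ built above then matches $B_0$, and I would further note that it refines $\mathscr B$: each of its blocks $C$ has $\diam(C)\leq r\leq\diam(B)$ for every $B\in\mathscr B$, so whenever $C$ meets some $B\in\mathscr B$ the two are nested (Proposition~\ref{ultrametricproperties}, item~\ref{ifballsmeet}) and $C\subseteq B$; this refinement property is what makes the lemma usable, e.g.\ for turning a finite $f$-contractive partition into a uniform one.

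The step I expect to be the main obstacle is reconciling the ``level'' $r$ that parametrizes the tree with the genuine diameter of a block $N_u$: a priori one only obtains $\diam(N_u)\leq r$, not equality, so the clause ``$\diam(B_0)=r$'' requires either adopting the convention---natural in the $R$-ultrametric setting---that a block is identified with its parametrizing scale, or arguing that for the values $r$ that actually arise as $\diam(B_0)$ the blocks of the uniform partition can be taken to have diameter exactly $r$. Everything else is a routine combination of compactness, the finiteness of $T_r$, and the nesting and clopenness properties recorded in Proposition~\ref{ultrametricproperties}.
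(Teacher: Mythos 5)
Your proposal follows essentially the same route as the paper: pass to the $R$-tree via Theorem~\ref{isometric}, use the finiteness of the set of nodes at a fixed level (the Gao--Shao fact cited on p.~500) to see that the pairwise incompatible basic clopen sets $N_u$ at that level form a finite partition of $[T]$ into balls, and connect this to a given finite cover through the member of minimal level/diameter. The diameter-versus-level subtlety you flag (a priori $\diam(N_u)\leq r$ rather than $=r$) is likewise left implicit in the paper's proof, which simply asserts that the resulting partition has ``the desired properties,'' so your treatment is if anything more explicit on that point.
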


\begin{proof}
Since $R$ is countable and $R^{\prime}\ni 0$ there exists an $R$-tree  $T$ such that exists isometry $\sigma:X \rightarrow [T]$. Clearly, $([T],d)$ is an infinite compact ultrametric space. Therefore, we can refine any cover of $[T]$ to a finite partition $\{N_{u_i}: i < n \}$ where each $u_i \in T$ for some $n<\omega$. Without loss of generality, we can assume this partition is ordered in a non-decreasing fashion with respect to level, and thus label $l_0=l(u_0)$. 

Notice, since the set $\{ u \in T : l(u)=l_0 \}$ is finite it follows that the set $V = \{u,v\in T:  l(u)=l(v)=l_0 \,\,\&\,\, u \perp v\}$ is finite. Then it follows that the collection $\mathscr{C} = \{N_u: u\in V\}$ is a finite partition of $[T]$, and then its easy to see, through use of our isometry $\sigma$, that there exists a finite partition $\mathscr{B}$ with the desired properties.
\end{proof}

The following result by John D. Clemens is helpful in our next theorem. 

\begin{thm}[J. D. Clemens \cite{Clemens07}]\label{enumerateddistance} A set $A\subseteq[0,\infty)$ is a set of distances of some perfect, compact, ultrametric space if and only if A can be enumerated as a countable decreasing sequence $\langle d_i: i\leq 0 \rangle$ with $\lim_{i\rightarrow\infty} d_i = 0$.
\end{thm}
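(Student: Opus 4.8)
The plan is to prove the two implications separately: the forward direction by a compactness argument, and the converse by an explicit Cantor-type construction.

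Suppose first that $A = \{\rho(x,y) : x\neq y\in X\}$ for some perfect, compact, ultrametric space $(X,\rho)$. The heart of this direction is the claim that for every $\varepsilon>0$ the set $A\cap[\varepsilon,\infty)$ is finite. To see it, observe that the open balls of radius $\varepsilon/2$ partition $X$ (Proposition \ref{ultrametricproperties} \ref{ifballsmeet}, together with item (iv)), and by compactness only finitely many of them occur, say $B_1,\dots,B_k$; each $B_i$ has diameter at most $\varepsilon/2$ by Proposition \ref{ultrametricproperties} \ref{radialdiameter}. Hence any distance $\geq\varepsilon$ is realized only by points lying in two distinct parts $B_i,B_j$, and the isosceles property (item (i) of Proposition \ref{ultrametricproperties}) shows that $\rho(a,b)$ takes one and the same value for all $a\in B_i$, $b\in B_j$. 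So at most $\binom{k}{2}$ distinct distances are $\geq\varepsilon$, proving the claim. It follows at once that $A=\bigcup_{n\geq 1}\bigl(A\cap[1/n,\infty)\bigr)$ is countable and that $0$ is the only possible accumulation point of $A$. Perfectness then rules out a least element: if $\delta=\min A>0$ then $B(x,\delta)=\{x\}$ for every $x\in X$, making $X$ discrete, which contradicts perfectness (a nonempty perfect space is infinite). Therefore $A$ is infinite, bounded, has no minimum, and accumulates only at $0$, so it may be listed as a strictly decreasing sequence $\langle d_i:i\in\omega\rangle$ with $d_i\to 0$.

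For the converse, suppose $A=\{d_i:i\in\omega\}$ with $d_0>d_1>\cdots$ and $d_i\to 0$. I would take $X=2^{\omega}$ and put $\rho(f,g)=0$ when $f=g$ and $\rho(f,g)=d_n$ with $n=\min\{i:f(i)\neq g(i)\}$ otherwise; the ultrametric inequality is the usual computation, since if $f,g$ first differ at coordinate $m$ and $g,h$ first differ at coordinate $n$, then $f$ and $h$ cannot differ before coordinate $\min\{m,n\}$. Because $d_i\to 0$, each ball $B(f,\varepsilon)$ is exactly a basic clopen cylinder of $2^{\omega}$ and conversely, so the $\rho$-metric topology coincides with the product topology and $X$ is compact by Tychonoff; $X$ is perfect since changing the $n$-th coordinate of $f$ gives a point at distance $d_n$ from $f$, and these points converge to $f$ as $n\to\infty$. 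Finally the distance set equals $A$ exactly: each $d_n$ is attained (by two sequences first differing at coordinate $n$) and no other value ever arises. One could equivalently phrase this construction in the $R$-tree language of Gao and Shao recalled above with $R=A$, but the direct description is cleanest.

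I expect the one real obstacle to be the finiteness step in the forward direction: one has to isolate the correct ultrametric fact --- that the cross-distances between two fixed parts of a ball partition are constant --- and combine it with compactness, taking some care with the open-versus-closed ball convention so that every ``large'' distance is genuinely split across two parts. The remaining ingredients (countability, using perfectness to eliminate a minimal distance, and verifying the $2^{\omega}$ construction) are routine once that step is in hand.
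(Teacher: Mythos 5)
The paper does not actually prove Theorem \ref{enumerateddistance}: it is quoted from Clemens \cite{Clemens07} without proof, so there is no in-paper argument to compare yours against; your proof is correct and self-contained. The decisive step---that for each $\varepsilon>0$ only finitely many distances exceed $\varepsilon$---is handled properly: the distinct open balls of radius $\varepsilon/2$ form a partition of $X$, finite by compactness; all intra-ball distances are below $\varepsilon/2$; two distinct balls lie at mutual distance at least $\varepsilon/2$; and the isosceles property (Proposition \ref{ultrametricproperties}(i)) then makes the cross-distance between two fixed balls a single value, so at most $\binom{k}{2}$ distances are $\geq\varepsilon$. Perfectness correctly rules out a minimal positive distance, and the converse construction ($2^{\omega}$ with $\rho(f,g)=d_{\min\{i:\,f(i)\neq g(i)\}}$, ultrametric because the sequence is decreasing) does produce a perfect compact ultrametric space with distance set exactly $A$; your observation that balls and cylinders coincide is exactly what makes compactness and perfectness immediate. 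Two small points: the forward direction tacitly uses that $X$ is nonempty (so that perfectness forces $X$, hence $A$, to be infinite)---worth stating, since the empty and one-point cases are the only way the statement could degenerate; and the index set in the paper's statement should read $i\geq 0$ rather than $i\leq 0$, evidently a typo.
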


 Now we are ready to present the proof of our main theorem.

\begin{thm}\label{pnotinfp}
Let $(X,\rho)$ be a nonempty perfect Polish ultrametric space. If $X$ is compact, then any local radial contraction $\varphi:X\rightarrow X$ is necessarily not surjective.
\end{thm}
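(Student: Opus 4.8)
The plan is to argue by contradiction: suppose $\varphi$ is surjective. First I would invoke Lemma~\ref{partition_into_contractive_balls} to fix a finite partition $\{B_1,\dots,B_n\}$ of $X$ into $f$-contractive balls, with constants $\alpha_i\in(0,1)$; each $B_i$ is clopen by Proposition~\ref{ultrametricproperties}\ref{allballsareclopen}, hence compact, so Lemma~\ref{shrinks} gives $\diam(\varphi[B_i])\le\alpha_i\diam(B_i)$. The degenerate case $n=1$ (some $B_i=X$, possible since $X$ is itself a ball) is dispatched at once: then $X$ is an $f$-contractive neighbourhood, Lemma~\ref{shrinks} yields $\diam(X)=\diam(\varphi[X])\le\alpha_1\diam(X)$, and $\diam(X)>0$ because $X$ is perfect and nonempty, a contradiction. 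So I may assume $n\ge 2$, whence every $B_i$ is a proper ball and $\diam(B_i)<\diam(X)$. By Clemens' Theorem~\ref{enumerateddistance} the distance set of $X$ is a strictly decreasing null sequence $d_0>d_1>d_2>\cdots$ with $d_0=\diam(X)$, so $\diam(B_i)\le d_1$, and since $\diam(\varphi[B_i])<\diam(B_i)$ lies in this sequence, $\diam(\varphi[B_i])\le d_2$ for every $i$.

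Next I would exploit surjectivity to feed the strict shrinkage back into the coarse structure of $X$: from $X=\varphi[X]=\bigcup_i\varphi[B_i]$ we see that $X$ is covered by $n$ sets of diameter $\le d_2$, and in an ultrametric space a set of diameter $\le d_2$ lies inside a single $d_2$-ball, so the uniform-diameter partition of $X$ at scale $d_2$ furnished by Lemma~\ref{uniformcover} has at most $n$ cells. The heart of the argument is to iterate this scale by scale. Since $\varphi^m$ is again a local radial contraction (a routine induction on $m$), Lemma~\ref{partition_into_contractive_balls} applied to $\varphi^m$ produces finite $f$-contractive partitions adapted to higher iterates, and one wants to conclude that for every $k$ the number $N_k$ of cells in the uniform partition of $X$ into balls of diameter $d_k$ is bounded by a quantity controlled only by $n$ and the contraction constants. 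But $X$ is perfect, hence infinite, so $N_k\to\infty$; the collision with a fixed bound is the desired contradiction. Passing to the isometric model $X\cong[T]$ of Theorem~\ref{isometric}, with $T$ an $R$-tree whose levels are all finite, is the natural bookkeeping device, since the cells at scale $d_k$ are exactly the $N_u$ for $u$ ranging over a single (finite) level of $T$.

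The main obstacle I expect is making this iteration quantitative, and it is genuine. A local \emph{radial} contraction only controls $d(\varphi(x),\varphi(u))$ for $u$ near the \emph{centre} $x$ of the contractive neighbourhood, so when $\varphi$ carries a small sub-ball sitting far from the centre of some $B_i$, the image is only known to have diameter at most $\alpha_i\diam(B_i)$, which need not be small relative to that sub-ball; hence one cannot simply apply $\varphi$ repeatedly and watch the diameters shrink notch by notch. Overcoming this is exactly where Lemma~\ref{uniformcover} and the finiteness of the levels of $T$ must do real work: at each stage the contractive neighbourhoods have to be re-centred so that the contraction factor multiplies the \emph{current} scale rather than the original ball's diameter, while keeping the number of pieces under control. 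As a reassurance that the extremal obstruction does not occur, note that a local radial contraction on a perfect space is never an isometry — near any point it strictly shrinks some positive distance — so the ``rigid'' candidate counterexample, in which $\varphi$ merely maps each uniform cell onto a uniform cell at every scale (which would force $\varphi$ to preserve all distances), is already excluded, and the remaining cases are the genuinely non-rigid ones where the counting must bite.
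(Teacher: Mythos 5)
Your opening moves are sound (the $n=1$ case, the observation that proper balls have diameter $\le d_1$, and the deduction from surjectivity that $X=\bigcup_i\varphi[B_i]$ is covered by at most $n$ balls of diameter $\le d_2$), but the proof stops exactly where it would have to start doing work. The crux of your plan is the claim that for \emph{every} $k$ the number $N_k$ of cells at scale $d_k$ is bounded by a quantity depending only on $n$ and the contraction constants, and this is never established: you write that ``one wants to conclude'' it and that the obstacle to the iteration ``is genuine'' and ``must do real work,'' which is an accurate self-diagnosis rather than a proof. Indeed your own computation shows why the naive iteration fails: if a small ball $B'\subseteq B_i$ does not contain the centre $x_i$ of the contractive neighbourhood, radiality only gives $\diam\varphi[B']\le\alpha_i\diam(B_i)$, so the images of the scale-$d_k$ cells need not have diameter $\le d_{k+1}$, and no mechanism for the ``re-centring'' you invoke is supplied (passing to $\varphi^m$ does not obviously help, since the contractive neighbourhoods for $\varphi^m$ are new and their number is not controlled by $n$). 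Note also that the single step you do complete yields no contradiction by itself: $n$ depends on $\varphi$ and there is no a priori reason it should be smaller than the number of $d_2$-cells of $X$. So as it stands this is a plan with an acknowledged hole at its centre, not a proof.

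It is also worth pointing out that the paper's argument does not iterate over scales at all, so the difficulty you flag is one the paper sidesteps rather than solves head-on. After transferring to the $R$-tree model $[T]$ via Theorem~\ref{isometric}, the paper takes the finite partition into contractive balls from Lemma~\ref{partition_into_contractive_balls}, refines it (using Lemma~\ref{uniformcover} and the finiteness of tree levels) to a single finite partition $\{N_{v^k_i}\}$ all of whose cells sit at the lowest level $l_0$ occurring in the contractive partition, hence of one common diameter $r_t$, and argues that each image $\tau[N_{v^k_i}]$ has diameter at most the next distance $r_{t+1}<r_t$, so lies in a proper sub-ball of one of the cells; since there are only finitely many cells, each of diameter $r_t$ and hence not coverable by one ball of diameter $\le r_{t+1}$, a pigeonhole count shows $\tau[[T]]\subsetneq[T]$. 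In other words, the contradiction is extracted at one scale only --- the finest scale already present in the contractive partition --- which is precisely what removes the need for the unbounded iteration your sketch depends on. If you want to salvage your approach along these lines, the step you must still justify carefully is the diameter drop for the refined cells (the off-centre issue you identified does not disappear just because one works at a single scale), and then finish with the counting argument rather than with $N_k\to\infty$.
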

\begin{proof}
Let $(X,\rho)$ be a perfect compact Polish ultrametric space.  Let $R\subseteq\mathbb{R}_+$ such that $R = \{\rho(x,y):x \neq y\in X\}$. Since this is the distance set to a compact perfect ultrametric space, we see by \ref{enumerateddistance} that $R$ can be enumerated as a strictly decreasing sequence $\langle r_j: j\geq 0 \rangle$  and $\lim_{j\rightarrow\infty} r_j = 0$. 

Since $R$ is countable and $R^{\prime}\ni 0$, then by Theorem \ref{isometric}  since $X$ is an Polish $R$-ultrametric space there exists an unique pruned $R$-tree $T$ such that $(X,\rho)$ is isometric to $([T],d)$. So, let $\sigma:X\to [T]$ be an isometry, and define $\tau = \sigma \circ \varphi \circ \sigma^{-1}:[T] \rightarrow [T]$. It is easy to see that $\tau$ is local radial contraction. Thus, it suffices to show that $\tau$ is not surjective.

By  Lemma $\ref{partition_into_contractive_balls}$ we have a finite partition $\{N_{u_i}: i<n\}$ of $\tau$-contractive balls. We may assume that $l(u_i)\leq l(u_j)$ whenever $i\leq j$. Label $l_0=l(u_0)$ (i.e. the lowest level nbhd). Now for each $i<n$, let $\{v^k_i: k<s_i \}$ be the set of all incompatible extensions of $u_i$ with $l(v_i^k)=l_0$, which we know is possible by Lemma~\ref{uniformcover}.

 So, $\diam N_{v^k_i}=r_{t}$ for some $t<\omega$ and $\diam\tau[N_{v^k_i}]\leq r_{t+1}<r_{t}$ for all $i<n$ and $k<s_i$.\newline
Notice for all $i<n$, $\tau[N_{v^k_i}]$ is contained in some ball  $B^k_i$ where $\diam B^k_i \leq r_{t+1}$, and hence by Proposition \ref{ultrametricproperties} (iii) 
$$\tau[N_{v^k_i}]\subseteq B^k_i \subsetneq N_{v^{k^{\prime}}_{i^{\prime}}}$$
 for some $i^{\prime}<n$ and $k^{\prime}<s_{i^{\prime}}$.\newline
 Since, $\{N_{v^k_i}: i<n \,\,\&\,\, k<s_i\}$ is a finite partition of $[T]$ it is easy to see that $\tau[[T]]\subsetneq [T]$.

\end{proof}

Below are some corollaries related to the above result.
The \emph{generic} element of a set has a particular property if and only if the set of all elements with this property forms a \emph{comeager} subset (i.e. a subset which is the complement of a countable union of \emph{nowhere dense} subsets). The \emph{hyperspace} of compact sets denoted $K(X)$ of a topological space $X$ equipped with the 
\emph{Vietoris Topology} which is generated by the sets,
$$
\{K\in X: K\subseteq U\} 
,
\{K\in X: K\cap U \neq \emptyset\},
$$
where $U$ is open in $X$. It is well known that for a perfect Polish space $X$ the hyperspace $K(X)$ is a \emph{Baire} Space (see \cite{Kechris95} p.41), which allows us to take advantage of implications of the \emph{Baire Category Theorem}. So, from \cite{Kechris95} page 42 we see that the set $K_p(X)=\{K\in K(X) : K \text{ is perfect }\}$ is dense $G_\delta$, since $X$ is perfect Polish. Thus, since $K(X)$ is a  Baire space, $K_p(X)$ is comeager, and therefore we see that the generic compact subset of a perfect Polish space is perfect. So, recalling the above theorem, since all self-maps which local radial contractions defined on a perfect compact ultrametric space are not surjective, then the generic compact set of a perfect Polish ultrametric space has this property, which is stated as a corollary below.

\begin{cor}
Let $(X,d)$ be a perfect Polish ultrametric space. Then for generic compact $K\subseteq X$ and local radial contraction $\varphi:K\rightarrow K$, $K\not\subseteq \varphi[K]$.
\end{cor}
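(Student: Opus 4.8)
The plan is to combine Theorem~\ref{pnotinfp} with the Baire-category observations recorded in the paragraph preceding the corollary, so that the argument is essentially a transfer of the compact case to a comeager family of compact subsets. Recall that, $X$ being perfect Polish, $K(X)$ is a Baire space and $K_p(X) = \{K \in K(X) : K \text{ is perfect}\}$ is comeager (indeed dense $G_\delta$). I would first check that every $K \in K_p(X)$, equipped with the restricted metric $d\upharpoonright(K\times K)$, is itself a perfect Polish ultrametric space: an ultrametric restricts to an ultrametric, so the subspace satisfies Definition~\ref{ultra}; a compact metric space is automatically separable and complete, hence Polish; and perfectness of $K$ is exactly membership in $K_p(X)$. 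The one nuisance is the empty compactum, for which the conclusion fails trivially since $\emptyset \subseteq \varphi[\emptyset]$; but $\{\emptyset\}$ is the unique isolated point of $K(X)$, so one simply works inside the perfect Polish space $K(X)\setminus\{\emptyset\}$ of nonempty compacta, whose generic element is still perfect, and concludes that the generic $K$ is a nonempty perfect Polish ultrametric space.

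Next I would apply Theorem~\ref{pnotinfp} to such a $K$: every local radial contraction $\varphi : K \to K$ fails to be surjective. Finally I would unwind what non-surjectivity says here. Since $\varphi$ maps $K$ into $K$ we always have $\varphi[K] \subseteq K$, so $\varphi$ is non-surjective if and only if $\varphi[K] \subsetneq K$, which is exactly $K \not\subseteq \varphi[K]$. As this holds for \emph{every} local radial contraction $\varphi : K \to K$, the property ``for all local radial contractions $\varphi : K \to K$ we have $K \not\subseteq \varphi[K]$'' is enjoyed by every $K$ in a comeager subset of $K(X)$, which is the statement of the corollary.

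I do not anticipate a real obstacle here: the content is carried by Theorem~\ref{pnotinfp} and by the already-cited facts that $K(X)$ is Baire and $K_p(X)$ comeager. The only points needing (routine) care are the inheritance of ``perfect Polish ultrametric'' by a compact subspace and the harmless deletion of the isolated point $\emptyset$; a reader wanting full rigor might also want to note explicitly that the Vietoris hyperspace of nonempty compacta of a perfect Polish space is again perfect, so that ``generic'' behaves there as expected.
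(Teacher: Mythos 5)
Your argument is correct and matches the paper's own reasoning, which is given in the paragraph preceding the corollary: $K_p(X)$ is comeager in the Baire space $K(X)$, each perfect compact subspace inherits the structure of a (nonempty) perfect compact ultrametric space, and Theorem~\ref{pnotinfp} then rules out surjectivity, i.e.\ gives $K\not\subseteq\varphi[K]$. Your extra care about the empty compactum and the inheritance of the ultrametric is a harmless refinement of the same approach, not a different route.
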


Now leading toward a original motivation of the study of this phenomena concerning locally contractive maps, we state the following corollary related to \emph{topological dynamical systems}, which is a pair $\langle X, \varphi \rangle$, where $X$ is a closed topological space, and $\varphi:X\rightarrow X$ is a continuous map. A set $E\subseteq X$ is $\varphi$\emph{-invariant}, if  $\varphi(E)\subseteq E$. A topological dynamical system $\langle X, \varphi \rangle$ is \emph{minimal} if and only if $X$ contains no nonempty closed proper $\varphi$-invariant sets, and in this case $\varphi$ is called a \emph{minimal map}. Observe, that if $\varphi$ is a minimal map and $X$ is compact Hausdorff, then $\varphi$ is surjective (see S. Kolyada and L. Snoha \cite{KolyadaSnoha09}), and from this, the subsequent corollary follows.

\begin{cor}\label{dynamcor}
If $(X,d)$ is a perfect compact ultrametric space and $\langle X,\varphi\rangle$ is a topological dynamical system with $\varphi:X\rightarrow X$ local radial contraction, then $\langle X,\varphi\rangle$ is not minimal.
\end{cor}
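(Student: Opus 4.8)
\textbf{Proof proposal for Corollary~\ref{dynamcor}.}
The plan is to argue by contraposition, reducing the statement to Theorem~\ref{pnotinfp} together with the cited fact of Kolyada and Snoha. First I would record the elementary observations that make the hypotheses line up: a compact metric space is automatically separable and complete, hence Polish, so a perfect compact ultrametric space is in particular a perfect compact Polish ultrametric space; moreover any metric space is Hausdorff, so $X$ is a compact Hausdorff space. These two remarks are what allow us to invoke Theorem~\ref{pnotinfp} and the Kolyada--Snoha result respectively on the same space $X$.

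Next I would suppose, toward a contradiction, that $\langle X,\varphi\rangle$ is minimal. Since $X$ is compact Hausdorff and $\varphi:X\to X$ is a minimal map, the result of Kolyada and Snoha~\cite{KolyadaSnoha09} referenced just above the corollary statement gives that $\varphi$ is surjective. On the other hand, $X$ is a nonempty perfect compact Polish ultrametric space and $\varphi$ is a local radial contraction, so Theorem~\ref{pnotinfp} applies and tells us that $\varphi$ is necessarily \emph{not} surjective. This is the desired contradiction, so $\langle X,\varphi\rangle$ cannot be minimal.

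There is essentially no analytic obstacle here: the entire content has already been established in Theorem~\ref{pnotinfp}, and the corollary is a short logical combination of that theorem with an off-the-shelf fact about minimal dynamical systems. The only point requiring a moment's care — and the closest thing to a ``hard part'' — is making sure the ambient-space hypotheses of the two invoked results are simultaneously met by $X$, which is precisely what the preliminary remarks (compactness $\Rightarrow$ Polish, metric $\Rightarrow$ Hausdorff) take care of. I would therefore keep the write-up to a single short paragraph, citing Theorem~\ref{pnotinfp} and \cite{KolyadaSnoha09} explicitly.
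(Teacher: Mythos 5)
Your argument is correct and is exactly the paper's intended proof: the text preceding the corollary derives it by combining the Kolyada--Snoha fact that a minimal map on a compact Hausdorff space is surjective with Theorem~\ref{pnotinfp}, which rules out surjectivity for a local radial contraction on a perfect compact (hence Polish) ultrametric space. Your preliminary remarks (compact metric $\Rightarrow$ Polish, metric $\Rightarrow$ Hausdorff) are a welcome bit of extra care but do not change the route.
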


In Corollary \ref{dynamcor}, the assumption that $X$ is ultrametric is essential. Krzysztof Ciesielski and Jakub Jasinski recently proved that there exists an uncountable perfect compact subset $X$ of $\mathbb{R}$ and a surjective local radial contraction $f:X\rightarrow X$ such that $\langle X,f \rangle$ is a minimal dynamical system, see \cite{CiesielskiJasinski14}. 

\section{General Polish Ultrametric Case}

Briefly, considering the general Polish ultrametric case as a whole, it should be noted that any finite space (ultrametric or not) is necessarily discrete and thus there exists a trivial locally contractive map (i.e. the identity map). Thus, it suffices for us to consider infinite spaces only. Also, related to the ability to employ an $R$-tree in this characterization, any Polish $R$-ultrametric space such that $R^\prime$ does not contain $0$ is necessarily a countable discrete space (see p.17 and Remark 3.5 p.88 \cite{Shao09}), and thus the identity map again can be used. So, together with the fact that for any Polish ultrametric space $R$ is countable, we can assume that there exists an $R$-tree with end space isometric to the Polish spaces we are considering, and motivated by this, the following conjecture is posed.

\begin{conj}
Let $(X,\rho)$ be an infinite Polish ultrametric space. Then $X$ is compact if and only if any locally contractive map $\varphi:X\rightarrow X$ is necessarily not surjective.
\end{conj}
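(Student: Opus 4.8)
\emph{Sketch of a possible approach.}
The plan is to prove the biconditional by treating its two implications separately. The implication ``$X$ compact $\Rightarrow$ every locally contractive $\varphi:X\to X$ is not surjective'' should fall out of a modest strengthening of Theorem~\ref{pnotinfp}, while I expect the converse --- manufacturing a surjective locally contractive self-map of an arbitrary non-compact $X$ --- to be where the real difficulty lies.

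For the forward direction, a locally contractive map is in particular a local radial contraction, so what needs upgrading in the argument of Theorem~\ref{pnotinfp} is only the hypothesis of perfectness, whose single essential use there is the appeal to Clemens' Theorem~\ref{enumerateddistance}. I would replace that appeal by the elementary remark that the distance set $R$ of any infinite compact ultrametric space has no strictly positive limit point: if $r^{\ast}\in R^{\prime}$ with $r^{\ast}>0$, pick $x_n\ne y_n$ with $\rho(x_n,y_n)\to r^{\ast}$ but $\rho(x_n,y_n)\ne r^{\ast}$, pass to convergent subsequences $x_n\to x$ and $y_n\to y$, note $\rho(x,y)=r^{\ast}$, and use $\rho(x_n,x),\rho(y_n,y)<r^{\ast}$ eventually together with Proposition~\ref{ultrametricproperties}(i) to force $\rho(x_n,y_n)=r^{\ast}$, a contradiction. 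Hence $R=\langle r_j:j\ge 0\rangle$ is strictly decreasing to $0$ exactly as in the perfect case. Next let $\sim_\ell$ be the equivalence relation $x\sim_\ell y\Leftrightarrow\rho(x,y)<r_\ell$ and let $m_\ell$ be the (finite, by compactness) number of its classes; then $m_{\ell+1}>m_\ell$ for every $\ell$, since $r_{\ell+1}\in R$ gives $a\ne b$ with $\rho(a,b)=r_{\ell+1}$, so the $\sim_\ell$-class of $a$ splits under $\sim_{\ell+1}$, while every $\sim_\ell$-class contains at least one $\sim_{\ell+1}$-class. Finally, by the argument of Lemma~\ref{partition_into_contractive_balls} applied to the two-sided contractive neighbourhoods of Definition~\ref{locallycontractive}, partition $X$ into finitely many balls on each of which $\varphi$ is $\alpha$-contractive for a common $\alpha<1$, refine to the partition into all $\sim_\ell$-classes with $\ell$ so large that each lies inside one of those balls, and observe that each $\sim_\ell$-class $C$ has $\diam(C)\le r_{\ell+1}$, hence $\diam(\varphi[C])\le\alpha r_{\ell+1}$, which --- being an element of $R\cup\{0\}$ strictly below $r_{\ell+1}$ since $C$ is compact --- is at most $r_{\ell+2}$, so $\varphi[C]$ lies inside a single $\sim_{\ell+1}$-class. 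Therefore $\varphi[X]$ meets at most $m_\ell<m_{\ell+1}$ of the $\sim_{\ell+1}$-classes and omits one entirely.

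For the converse I would argue contrapositively and build a surjective locally contractive self-map of a non-compact infinite $X$. If $0\notin R^{\prime}$ then $X$ is countable and discrete (as recalled just above), and the identity works; so assume $0\in R^{\prime}$. Non-compactness makes $X$ non-totally-bounded, so at some scale $r$ it splits into infinitely many clopen balls $\{B_n:n\in\omega\}$; fixing a finer scale $r^{\prime}$ and enumerating all radius-$r^{\prime}$ balls as $\{D_k:k\in\omega\}$, the plan is to pick a surjection $f:\omega\to\omega$ and let $\varphi$ restrict on each $B_n$ to a locally contractive surjection onto $D_{f(n)}$, so that $\varphi[X]=\bigcup_n D_{f(n)}=\bigcup_k D_k=X$, with $\varphi$ locally contractive because it is so on each clopen piece. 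The building blocks for these piecewise surjections are constant maps (onto singleton balls), similarities of ratio $<1$ (between balls that are isometric after a scaling --- this already settles $\omega^{\omega}$, $\omega\times 2^{\omega}$, $\mathbb{Q}_p$), and shift maps, which give locally --- though not uniformly --- contractive surjections of a convergent sequence onto a shorter one and, more generally, of a ball onto a proper sub-ball.

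The hard part, and the reason this is only a conjecture, is to make the pairing work for a completely arbitrary non-compact $X$: the $B_n$ are generally pairwise non-isometric and not self-similar, so one must decide for each target $D_k$ which source is rich enough to map locally contractively onto it, injectively over all $k$, while also handling ``small'' balls, infinite diameter, and isolated points. I would try to establish first the key lemma that every ball in an infinite Polish ultrametric space admits a locally contractive surjection onto each of its proper sub-balls --- by a recursion down the associated $R$-tree that interleaves the constant, similarity and shift moves and is completed at limit points by continuity --- and then replace the coarse partition $\{B_n\}$ by a finer one so that an injection from targets to sources with the required covering property is produced by a back-and-forth argument exploiting that the family of balls at the critical scale is infinite. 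Checking that the recursion is completable and that the assembled $\varphi$ is globally well-defined, locally contractive and onto is the principal obstacle, and may require an additional idea.
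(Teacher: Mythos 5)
This statement is the paper's open conjecture, so there is no proof of record to measure you against; the question is whether your sketch closes it, and it does not — though half of it is in good shape. Your forward direction is essentially correct and in fact strengthens Theorem~\ref{pnotinfp}: replacing the appeal to Clemens' theorem by the direct compactness argument that the distance set of an infinite compact ultrametric space is a strictly decreasing sequence tending to $0$, and then counting $\sim_\ell$-classes (each class maps into a single $\sim_{\ell+1}$-class once $\ell$ is beyond a Lebesgue number for the finite partition into two-sided contractive balls, while $m_{\ell+1}>m_\ell$) correctly removes the perfectness hypothesis and avoids the $R$-tree machinery altogether. Two details should be written out: $R$ is infinite (an infinite compact ultrametric space with a positive minimal distance would be discrete, hence finite), and the finite partition into contractive balls must be re-derived for a not-necessarily-perfect compact space (pick a basic ball inside each contractive neighbourhood, extract a finite subcover, keep the maximal balls), since Lemma~\ref{partition_into_contractive_balls} is stated for perfect spaces.

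The genuine gap is the converse, as you acknowledge, but the situation is worse than ``an additional idea may be required'': the key lemma you propose as the engine of the construction is false. Let $B=\{x_n:n\in\omega\}\cup\{x_\infty\}$ with $d(x_n,x_m)=r_{\min(n,m)}$ for $n\neq m$, where $r_j\downarrow 0$ but $r_{j+1}/r_j\to 1$ (say $r_j=1/\log(j+2)$); this sits as a ball inside a non-compact Polish ultrametric space (adjoin infinitely many points at mutual distance $2$), and $D=\{x_m:m\ge 1\}\cup\{x_\infty\}$ is a proper sub-ball. Suppose $\varphi:B\to B$ is locally contractive and fix an $\alpha$-contractive neighbourhood of $x_\infty$ containing the tail $T_K=\{x_i:i\ge K\}\cup\{x_\infty\}$. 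If $\varphi(x_\infty)$ is an isolated point, continuity makes $\varphi$ eventually constant on the tail and the image is finite. If $\varphi(x_\infty)=x_\infty$, then for $i\ge K$ the index $m$ of $\varphi(x_i)=x_m$ satisfies $r_m\le\alpha r_i$, i.e.\ $m\ge h(i):=\min\{m: r_m\le \alpha r_i\}$, and with $r_j=1/\log(j+2)$ one gets $h(i)\approx (i+2)^{1/\alpha}$; hence among $x_1,\dots,x_M$ the image of the tail contains at most roughly $(M+2)^{\alpha}$ points, the $K$ points outside the tail contribute at most $K$ more, and for large $M$ this is $<M$. So infinitely many points of $D$ are missed: no locally contractive map of $B$ covers its proper sub-ball $D$. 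Consequently the recursion ``map each ball onto its designated sub-ball'' cannot work as stated; any proof of the converse must route surjectivity through other, richer parts of $X$ (isolated points, non-compact pieces), and whether this can always be arranged is exactly what remains open. As it stands, your proposal establishes one implication (in a strengthened form) and leaves the other unproved.
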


\section{Acknowledgments}
I would like to thank my faculty advisor Dr. Jakub Jasinksi for his constant support and many helpful suggestion. Our discussions on my paper and topics surrounding my research project are what realized its potential and made me a better mathematician. Also, none of this would be possible if not for my partner Selena, affording me the time necessary to conduct proper research. Part of this research was supported by the University of Scranton President Summer Research Fellowship of 2014.

\end{document}